\newtheorem{thm}{Theorem}
\numberwithin{figure}{section}
\numberwithin{equation}{section}
\numberwithin{table}{section}
\begin{document}
\title{Energetic Stable Discretization for Non-Isothermal  Electrokinetics Model}
\author{Simo Wu\thanks{Department of Mathematics, Pennsylvania State University, University Park, PA 16802, USA (szw184@psu.edu).}, Chun Liu\thanks{Department of Applied Mathematics, Illinois Institute of Technology, Chicago, IL 60616, USA (cliu124@iit.edu).}, Ludmil Zitakanov\thanks{Department of Mathematics, Pennsylvania State University, University Park, PA 16802, USA (ludmil@psu.edu).}}

\pagestyle{myheadings} \markboth{Numerics on Non-Isothermal Electrokinetics}{ S. Wu, L. Zitakanov, C. Liu} \maketitle

\abstract{We propose an edge averaged finite element(EAFE) discretization to solve the Heat-PNP (Poisson-Nernst-Planck) equations approximately. Our method enforces positivity of the computed charged density functions and temperature function. Also the thermodynamic consistent discrete energy estimate which resembles the thermodynamic second law of the Heat-PNP system is prescribed. Numerical examples are provided.}

 \section{Background}

 The study of thermodynamic properties of complex fluid involving electrical solvent is of interest of many biological and physiological applications~\cite{Nithiarasu:2018,Knox:1994,Eleuteri:2015}. For example, electrokinetic microfluidic devices have been widely used in biomedical and biotechnological applications and chemical synthesis, which can be used for pumping and controlling the liquid flow in microfluidic systems or separating constituents suspended in a liquid~\cite{Knox:1994, Ramos:2006}. While low thermal conductivity devices are widely used, temperature effect should be includeded as it affects the electrical conductivity and can produce substantial heat driven flow. In addition, the interaction between heat flux and electric field could result in Joule heating effect as a byproduct of presense of electric field ~\cite{Sanchez:2013,Grushka:1989aa}. This effect is generated by the ohmic resistance of the electrolyte subjected to electric current and, clearly, the traditional isothermal models aer not adequate in such situation.
More recent works account for thermal effects and show that these interactions can be modeled in a thermodynamic consistent way (see~\cite{Liu:2017}).

Another important pool of applications stems from biology, namely, modeling ion channels. Such channels are formed by charged walls and play a fundamental role in controlling and regulating the nervous system.  In addition to the three main types of ion channels: voltage gated, ligand gated and chemically gated ion channels, temperature controlled ion channels are also been found and studied in the literatude (see~\cite{Cesare:1999,Reubish:2009}). For example, six members of the mammalian Transient receptor potential ion channels (TRP) respond to varied temperature thresholds. TRPV1, TRPV2, TRPV3, and TRPV4 are heat activated, whereas TRPM8 and TRPA1 are activated by cold~\cite{Reubish:2009}.

Despite there is vast amount of works devoted on related experimental studies\cite{Grushka:1989aa,Knox:1994}, mathematical modeling\cite{Tang:2003,Petersen:2004} and simulation\cite{Xuan:2008}, \cite{Shamloo:2015}, 
thermodynamically consistent models are much less studied and developed.

To begin the discussion of the non-isothermal model we first consider
the isothermal case and the Poisson-Nernst-Planck (PNP) system. This model
is widely used to describe the transport of charged particles ~\cite{Qiao:2014, Im:2002, Gillespie:2002, Guo-Wei:2012}with low
concentration and couples the well-known drift-diffusion equations for
the concentration with the Poisson equation for the electrostatic
potential with appropriate initial and boundary conditions~\cite{Liu:2014}:
  \begin{align*}
 -\nabla\cdot(\epsilon\nabla\phi) & = e_c \sum_{i =1}^{N}q_{i}\rho_{i} \tag{Poisson Equation}\\
 \frac{\partial\rho_{i}}{\partial t} &= -\nabla\cdot \vec{J}_{i} \tag{Drift Diffusion equation} \\
 \vec{J}_{i} & = -D_{i}\nabla\rho_{i}-q_{i}\mu_{i}\rho_{i}\nabla\phi\tag{Flux},
 \end{align*}
Here $D_{i} > $  0 is the diffusivity, $q_{i}$ is the valence number, and $\mu_{i}$ is the mobility of the $i$-th ion species.
The PNP system models the interaction of N ionic species through an electrostatic field, usually $N \geq 2$. $\phi$ is the electrostatic potential, $\rho_{i}$ stands for the charge density of the ith species.

Both ion channels and electroosmosis can be modeled by taking the background fluid into consideration and  coupling the PNP system with the Navier-Stokes equation. The energetic variational formulation which deals with the isotropic case for the resulting system from a mechanical prospective is well elaborated in\cite{Xu:2014ab, Horng:2012aa}. The energy law for this system, as stated in~\cite{Xu:2014ab} is:

\begin{eqnarray*}
  \lefteqn{
    \frac{d}{dt}\{ \int_{\Omega} \frac{1}{2}\rho_f |\vec{u} |^{2} +\sum_{i = 1}^{N} \rho_{i}(\rho_{i}-1)+\frac{\epsilon}{2}|\nabla\phi |^2 dx +\int_{\Gamma_{R}} \frac{\kappa}{2}| \phi|^2 ds\}
  }\\
  &=& - \int_{\Omega} \frac{\mu}{2} |\frac{{\nabla u + \nabla u }^{T}}{2} |^2 + \sum_{i = 1}^{N} D_{i}\rho_{i}|\nabla(\log \rho_{i}+q_{i}\phi)|^2 dx,
\end{eqnarray*}
Here, $\Gamma_{R}$ denotes the part of boundary where the Robin boundary condition: $\epsilon \nabla\phi \cdot \vec{n} +\kappa \phi = C$ is imposed.
The total energy consists of the sum of the kinetic energy of the background fluid, the electro potential energy and the free energy of each of the species. The dissipation of the total free energy is driven by the viscosity and diffusion. This coupled system can be derived through energetic variational approach (EVA, introduced in~\cite{Eisenberg:2007}) and is as follows:
\begin{align*}
 -\nabla\cdot(\epsilon\nabla\phi) & = e_c \sum_{i =1}^{N}q_{i}\rho_{i} \tag{Poisson equation}\\
\frac{\partial\rho_{i}}{\partial t} &= -\nabla\cdot \vec{J}_{i} \tag{Drift diffusion equation} \\
 \vec{J}_{i} & = -D_{i}\nabla\rho_{i}-q_{i}\mu_{i}\rho_{i}\nabla\phi-\rho_{i}\vec{u} \tag{Flux}\\
 \rho_{f}(\vec{u}_{t}+\vec{u}\cdot \nabla\vec{u})+\nabla p& = \nabla \cdot \frac{{\nabla u + \nabla u }^{T}}{2} -  \sum_{i = 1}^{N}q_{i}\rho_{i}\nabla\phi \tag{NS equation}\\
 \nabla\cdot\vec{u} &= 0 \tag{Incompressibility}
\end{align*}
As is evident from the equations above, this model does not take temperature effect into consideration. 

A number of numerical solution approaches for the PNP system and relevant mathematical models models have been considered in the literature. For example, A. Flavell \textsl{et al.} (2014) \cite{Flavell:2014} applied a conservative finite difference scheme which achieves second-order accuracy in both space and time and also conserves total concentration for each ion species.  C. Liu \textsl{et al.} (2015) \cite{Liu:2015} used an energetically stable finite element method for the PNP-NS(Poisson-Nernst-Planck-Navier-Stokes) system. D. Xie \textsl{et al.} (2016) \cite{Xie:2016} used a non-local finite element method for the PB (Poisson-Boltzmann) equation to tackle the problem of solution singularity caused by point charge term.

In order to capture the temperature change \cite{Feireisl:2007}, \cite{Feireisl:2009} considered a thermodynamic approach for incompressible Newtonian fluid coupled with temperature model. The resulting model is based on the first and second law of thermodynamics and in accordance with
the Fourier's Law assumes that  the internal energy is proportional to the temperature and the heat flux is proportional to the temperature gradient. The equation for the temperature is derived through energy conservation and the whole system satisfies the inequality constraint for entropy production and this is in agreement with the second law of thermodynamics.
 
 Some works in biology~\cite{Tang:2003},\cite{Petersen:2004}, take both temperature and electrokinetics into consideration by a simple coupling which adds electrokinetic force into the Navier-Stokes equation and use PNP or Poisson-Boltzmann to model the electric field. The temperature effects are incorporated by including  the Joule heat force term into the heat equation. One issue with this model is that it is unclear  what the energy associated with this system, and, moreover, it is not evidennt whether the second thermodynamics law is violated by the model. 

In our study, we are using a more consistent model to capture the interactions mentioned above. According to first law of thermodynamics, heat and work can convert to each other, so it is natural to consider this energy  interchange in order to derive the equation for the temperature. By the second law of thermodynamics, the conversion between heat transfer and work follows the rule that keeps the entropy increasing. We follow the ideas in~\cite{Liu:2017} and add this as an  inequality constraint for the system. As it turns out, maintaining such inequality constraints is also crucial for the stability of the discretized model. This motivated the choice of discretization and we have applied the Edge Average Finite Element (EAFE, see~\cite{Xu:1999}) discretization, which satisfies the discrete maximum principle for the temperature. The discrete maximum principle turns out to be the key to numerical stability and one of the key results that we present is the energy estimate satisfied by the numerical model.

The paper is organized as follows. We introduce the Heat-PNP equations in section 2 and the corresponding energy law in section 3. In section 4 we propose our discretization and prove an energy estimate for the discretized Heat-PNP system. In section 4 some numerical experiments are provided to validate the stability of our numerical scheme. We also  provide numerical tests which show consistency with qualitative phenomena observed experimentally~\cite{Nithiarasu:2018}. 

\section{The Heat-PNP equations and their discretization}

As it is well known~\cite{kirby:2010} the PNP system is as follows:
\begin{equation}
\begin{cases}
\displaystyle \frac{\partial}{\partial t} \rho_i = \nabla \cdot \left(D_{i} \left( \nabla\rho_i + \frac{z_{i}e}{k_{B}T}\rho_i\nabla\phi \right)\right) ,      i = 1,...,N, \\
\displaystyle \nabla\cdot\left(\epsilon \nabla\phi \right) = -\left(\rho_0 +\sum_{i=1}^{N}z_{i}e\rho_i\right)
\end{cases}
\end{equation}
We treat the evolution equations for ions' concentration as a system of continuity equations. After introducing the flux variable for each ion species, this sytem can be written as:
\begin{equation}
\begin{cases}
\displaystyle \frac{\partial\rho_{i}}{\partial t} = -\nabla\cdot \vec{J}_{i} , \\
\displaystyle \vec{J}_{i}  = -D_{i}\nabla\rho_{i}- \frac{z_{i}e}{k_{B}T}\rho_{i}\nabla\phi, \\
\displaystyle \nabla\cdot\left(\epsilon \nabla\phi \right) = -\left(\rho_0 +\sum_{i=1}^{N}z_{i}e\rho_i\right)
\end{cases}
\end{equation}
We use the same fashion to rewrite the PNP system with temperature throughout this paper.

We consider the the temperature PNP system in $\Omega \subseteq \mathbb{R}^{n},n = 2,3$ without background fluid. According to \cite{Liu:2017}, this system has the form:
\begin{equation}
\begin{cases} 
\displaystyle \frac{\partial}{\partial t} \rho_i + \nabla \cdot (\rho_i \vec{u_i})=0 ,      i = 1,...,N, \\
\displaystyle \nu_i \rho_i \vec{u_i} = - k_B \nabla (\rho_i T) - z_i e \rho_i \nabla \phi,      i = 1,...,N,  \\
\displaystyle -\nabla \cdot \epsilon \nabla \phi =\sum_i \rho_i z_i e+\rho_f. \\
\displaystyle \left(\sum_{i=1}^N  k_B C_i \rho_i\right) \frac{\partial T}{\partial t} + \left(\sum_{i=1}^N  k_B C_i \rho_i \vec{u_i} \right) \cdot \nabla T + \left(\sum_{i=1}^N k_B \rho_i  \nabla \cdot \vec{u_i}\right) T \nonumber =\\  \nabla \cdot k \nabla T +  \sum_{i=1}^N \nu_i\rho_i |\vec{u_i}|^2 +q,
\end{cases}
\label{temp_PNP}
\end{equation}
Here, $\phi$ is the electrostatic potential , $\rho_{i}$ stands for the charge density of the ith species, T is the temperature function and $\vec{u_i}$ is the macroscopic velocity of the ith species.  All these variables are space time functions defined on $\Omega \times [0,T]$. $D_{i} > $  0 , $q_{i}$, and $\mu_{i}$ are correspondingly the diffusivity constant, the valence number and the constant mobility of the ith ion species.

\subsection{Initial and Boundary Conditions}

For the system~\eqref{temp_PNP}, the initial conditions at \(t=0\) are 
\[
T(x,0) = T_{0}(x), \quad
\rho_i(x,0) = \rho_{i,0}(x), \quad i = 1,\ldots,N,
\quad x\in  \Omega,
\]

The boundary conditions are of different types:
We have homogeneous no-flux boundary conditions for each of the concentrations of ion species,
\[
\rho_{i}\vec{u_{i}}\cdot \vec{n}  = 0           \text{         on      }  \partial\Omega.
\]
For the Poisson equation, we partition the boundary into disjoint parts: $\partial\Omega = \Gamma_{D}\cup\Gamma_{N}\cup\Gamma_{R} $
\begin{align*}
\phi &= \delta V \text{               on     } \Gamma_{D},\\
\epsilon\nabla\phi\cdot \vec{n} &= E\text{               on     } \Gamma_{N},\\
\epsilon\nabla\phi\cdot \vec{n} + \kappa\phi&= C\text{               on     } \Gamma_{R},\\
\end{align*}
For the temperature equation we have Dirichlet boundary conditions
which, as we show later help us in deriving the discrete energy law.
\[
T =\delta T  \text{      on       } \partial \Omega.
\]

\section{Energy of the Heat-PNP system} 
According to \cite{Liu:2017}, the Heat-PNP system satisfies simultaneously first and second energy laws of thermodynamics, which are:
\begin{equation}
\begin{cases}
\displaystyle  \frac{d}{dt}(U+K) = \int_\Omega \left(q+ \sum_{i=0}^N \rho_i \frac{\partial \psi}{\partial t} \right)d\mathbf{r} + \int_{\partial \Omega} k\nabla T d\mathbf{r},\\
\displaystyle   \frac{d}{dt} S = \Delta + \int_\Omega \frac{q}{T} d\mathbf{r} + \int_{\partial \Omega}  \frac{k\nabla T}{T} d\mathbf{r}.
\end{cases}
\label{energy_law_g}
\end{equation}
These laws involve the  internal energy functional $U$,
the kinetic energy functional $K$ of the system,  and the entropy functional
$S$ of the whole system. In addition, $\psi$ is the applied electric field, which, for simplicity, we assume to be independent of time in our system here. 

More precisely, if we substitute the predefined entropy functional and entropy production functional for Heat PNP system we obtain:
\begin{equation}
\begin{cases}
\displaystyle S = \sum_{i = 1}^{N}\int_\Omega k_{B} \rho_i(\log \rho_i -C_i \log T-C_i) dx\\
\displaystyle \Delta = \sum_{i = 1}^{N} \int_{\Omega} \frac{\nu_{i}\rho_i |\vec{u_i}|^2}{T}+\frac{1}{k}\left|\frac{k\nabla T}{T}\right|^2  dx
\end{cases}
\end{equation}
Next, we use the divergence theorem to derive the second law for our system in terms of the unknon variables:
\begin{equation}
\frac{d}{dt}\sum_{i = 1}^{N}\int_\Omega k_{B} \rho_i(\log \rho_i -C_i \log T-C_i) dx = -\sum_{i = 1}^{N} \int_{\Omega} \frac{\nu_{i}\rho_i |\vec{u_i}|^2}{T}+\frac{1}{k}|\frac{k\nabla T}{T}|^2  - \nabla\cdot(\frac{k\nabla T}{T}) dx
\label{energy_law_T}
\end{equation}

In fact, to derive the second energy law of thermodynamics from the equation set is straightforward. We multiply by $\frac{1}{T}$ both sides of the heat equation \eqref{temp_PNP} and integrate over the domain. For the left hand side we then we have
\begin{eqnarray*}
\lefteqn{\int_\Omega \sum_{i = 1}^{N}k_{B}C_{i}\rho_{i} \frac{\partial T}{\partial t}\cdot\frac{1}{T}+\sum_{i = 1}^{N}k_{B}C_{i}\rho_{i}\nabla\cdot (u_{i}T)\cdot\frac{1}{T}+ \sum_{i = 1}^{N}k_{B} (1-C_{i})(\rho_{i})\nabla \cdot u_{i} dx}  \\
& = & \int_\Omega \sum_{i = 1}^{N}k_{B}C_{i}\rho_{i} \frac{\partial \log  T}{\partial t} + \int_\Omega \sum_{i = 1}^{N} k_{B}C_{i}(\frac{\nabla(\rho_{i}u_{i}T)}{T}-u_{i}\nabla\rho_{i})+ \sum_{i = 1}^{N}k_{B} (1-C_{i}) (\nabla(\rho_{i}u_{i}-u_{i}\nabla\rho_{i})) dx \\
& = &
\int_\Omega \sum_{i = 1}^{N}k_{B}C_{i}\rho_{i} \frac{\partial \log  T}{\partial t}+\int_\Omega \sum_{i = 1}^{N}k_{B}C_{i}(\nabla(\rho_{i}u_{i})-\rho_{i}u_{i}\frac{\nabla T}{T}-u_{i}\nabla\rho_{i})+ \sum_{i = 1}^{N}k_{B} (1-C_{i}) (\frac{\rho_{i}}{\partial t}-\frac{\rho_{i}}{\partial t} \log  \rho_{i}) dx \\
& = & \int_\Omega \sum_{i = 1}^{N}k_{B}C_{i}\rho_{i} \frac{\partial \log  T}{\partial t}+\int_\Omega \sum_{i = 1}^{N}k_{B}C_{i}(-\frac{\rho_{i}}{\partial t}+\frac{\rho_{i}}{\partial t}\log  T--\frac{\rho_{i}}{\partial t} \log  \rho_{i})+ \sum_{i = 1}^{N}k_{B} (1-C_{i}) (\frac{\rho_{i}}{\partial t}-\frac{\rho_{i}}{\partial t} \log  \rho_{i}) dx \\
& = & \frac{d}{dt}\sum_{i = 1}^{N}\int_\Omega k_{B} \rho_i(\log  \rho_i -C_i \log T-C_i) dx. 
\end{eqnarray*}
Next, for the right hand side we have:
\begin{eqnarray*}
\lefteqn{\int_\Omega \frac{\nabla\cdot k\nabla T}{T}+\sum_{i = 1}^{N} \frac{\nu_{i}\rho_i |u_i|^2}{T}+\frac{q}{T} dx} \\
&= & -\sum_{i = 1}^{N} \int_{\Omega} \frac{\nu_{i}\rho_i |u_i|^2}{T}+\frac{1}{k}|\frac{k\nabla T}{T}|^2  - \nabla\cdot(\frac{k\nabla T}{T})
\end{eqnarray*}
Notice that in the derivation we used the continuity equations.
Also, since this energy law is for closed systems, zero flux boundary condition is used when integrating by parts and finally we used that the heat source vanishes, namely,  we have $q=0$. 

Clearly, the energy law \eqref{energy_law_T} makes sense
when  ion concentrations and temperature are positive
and the solution satisfies the following regularity assumptions:
\begin{eqnarray*}
  &&\phi \in \mathbb{H}^{1}_{\Gamma_{D}} \equiv \{ \nu\in \mathbb{H}^{1}(\Omega) |  \left.\nu\right\vert_{\Gamma_{D}} = \delta V   \}\\
  && \rho_i \in {W} \equiv \mathbb{H}^{1} \cap \mathbb{L}^{\infty}(\Omega)
  \end{eqnarray*}

\subsection{Log-density formulation and its energy}
We introduce a change of variables (also known as a $\log$  transformation for the
ion concentrations) which is as follows

\begin{align*}
\eta_{i}(x,t) &= \log  \rho_{i}(x,t)  \text {,             } i = 1,...,N\\
\xi(x,t) & = \log  T(x,t) 
\end{align*}
We note that such change of variables requires that the concentrations are positive and we have 
\[\eta_i = \log \rho_i \in W \equiv \mathbb{H}^{1} \cap \mathbb{L}^{\infty}(\Omega) \hookrightarrow \mathbb{H}^{2}
\]
The embedding above holds for space dimensions $d \le 3$.  The
Heat-PNP system then is written in a variational (weak) form: Find
$\eta_{i}(t) \in W$ , $ \xi(t) \in W$ with and $\phi \in
\mathbb{H}^{1}_{\Gamma_{D}} $ such that:
\begin{equation}\label{variational_form}
\begin{cases}
\displaystyle  (\epsilon\nabla\phi,\nabla v) + \langle \kappa \phi ,v \rangle_{\Gamma_{R}}- \sum_{i = 1}^{N} q_{i} (e^{\eta_{i}},v) = \langle C,v \rangle_{\Gamma_{R}}+ \langle S,v\rangle_{\Gamma_{N}}\\
\displaystyle (\frac{\partial}{\partial t} e^{\eta_{i}},w) + (\frac{\nabla(e^{\eta_{i}}e^{\xi})+e^{\eta_{i}}z_{i}e\nabla\phi}{\nu_{i}},\nabla w) = 0\\
\displaystyle (\sum_{i = 1}^{N} k_BC_i e^{\eta_{i}} \frac{\partial}{\partial t}e^{\xi},w) - (\sum_{i = 1}^{N} k_BC_i e^{\xi} u_i,\nabla( e^{\eta_{i}}w)) + (k\nabla e^{\xi},\nabla w) = (\sum_{i=1}^N \nu_i e^{\eta_{i}}|u_i|^2,w)
\end{cases}
\end{equation}
Here we denote by $\vec{u}_i$
the macroscopic velocity of ith species defined as
$\vec{u}_i = \frac{\nabla(e^{\eta_{i}}e^{\xi})+e^{\eta_{i}}z_{i}e\nabla\phi}{\nu_{i}e^{\eta_{i}}}$, and for the energy law~\eqref{energy_law_T} we have
\begin{equation}\frac{d}{dt}\sum_{i = 1}^{N}\int_\Omega k_{B} e^\eta_i(\eta_i -C_i \xi-C_i) dx = -\sum_{i = 1}^{N} \int_{\Omega} \frac{\nu_{i}e^{\eta_{i}} |u_i|^2}{T}+\frac{1}{k}|\frac{k\nabla T}{T}|^2  - \nabla\cdot(\frac{k\nabla T}{T})
\end{equation}
Furthermore, if we denote $\nu_{i}=1$, $C_{i}=1$, $z_{i}=1$ and for
simplicity we ignore the constant $k_{B}$, and under the appropriate
zero flux boundary condition we arrive at the following simplified
form of~\eqref{energy_law_T}:
\begin{equation}\label{energy_law_T_weak}  
  \frac{d}{dt}\sum_{i = 1}^{N}\int_\Omega e^\eta_i(\eta_i -\xi -1) dx = -\sum_{i = 1}^{N} \int_{\Omega} \frac{e^{\eta_{i}} |u_i|^2}{e^{\xi} }+k|\nabla \xi^2|.
\end{equation}

The derivation of the the energy law~\eqref{energy_law_T_weak} from
the variational form~\eqref{variational_form} is as follows:
We choose the test function to be $e^{-\xi}$ in the temperature
equatio and we obtain
\begin{align*}
(\sum_{i = 1}^{N} e^{\eta_{i}} \frac{\partial}{\partial t}e^{\xi},e^{-\xi}) - (\sum_{i = 1}^{N}e^{\xi} u_i,\nabla( e^{\eta_{i}}e^{-\xi})) + (k\nabla e^{\xi},\nabla e^{-\xi}) &= (\sum_{i=1}^N  e^{\eta_{i}}|u_i|^2,e^{-\xi})\\
\int_{\Omega} \sum_{i = 1}^{N} e^{\eta_{i}} \frac{\partial \xi }{\partial t} - (e^{\xi}(\nabla\eta_{i}+\nabla\xi)+z_{i}e\phi,e^{-\xi}e^\eta_{i}(\nabla\eta_{i}-\nabla\xi))-k|\nabla \xi^2|  & = \sum_{i = 1}^{N} \int_{\Omega} \frac{e^{\eta_{i}} |u_i|^2}{e^{\xi} } \\
 \int_{\Omega} \sum_{i = 1}^{N} e^{\eta_{i}} \frac{\partial \xi }{\partial t} - (e^{\xi}e^\eta_{i}(\nabla\eta_{i}+\nabla\xi)+ez_{i}e^\eta_{i}\phi,(\nabla\eta_{i}-\nabla\xi))-k|\nabla \xi^2|  & = \sum_{i = 1}^{N} \int_{\Omega} \frac{e^{\eta_{i}} |u_i|^2}{e^{\xi} } \\
\end{align*}
Next, we test the continuity equation with $w = \eta_{i}-\xi$:  
\begin{align*}
(\frac{\partial}{\partial t} e^{\eta_{i}}, \eta_{i}-\xi) + ({\nabla(e^{\eta_{i}}e^{\xi})+e^{\eta_{i}}z_{i}e\phi},\nabla ( \eta_{i}-\xi )) &= 0\\
(e^{\eta_{i}}\frac{\partial}{\partial t}\eta_{i} , \eta_{i}-\xi) &= -(e^{\eta_{i}}e^{\xi}{\nabla({\eta_{i}}+{\xi})+e^{\eta_{i}}z_{i}e\phi},\nabla ( \eta_{i}-\xi )) 
\end{align*}
Combining the temperature and the continuity equations together shows that:
\begin{align*}
\int_{\Omega} \sum_{i = 1}^{N} e^{\eta_{i}} \frac{\partial \xi }{\partial t}+(e^{\eta_{i}}\frac{\partial}{\partial t}\eta_{i} , \eta_{i}-\xi)& = - \sum_{i = 1}^{N} \int_{\Omega} \frac{e^{\eta_{i}} |u_i|^2}{e^{\xi} } +k|\nabla \xi^2|  \\
\int_{\Omega} \sum_{i = 1}^{N} \frac{d}{dt} e^{\eta_{i}}(\eta_{i}-\xi-1)& = - \sum_{i = 1}^{N} \int_{\Omega} \frac{e^{\eta_{i}} |u_i|^2}{e^{\xi} } +k|\nabla \xi^2|  \\
\end{align*}

\subsection{The discrete formulation}
We consider a mesh $\mathbb{T}_{h}$ of simplices covering our computational domain (triangles in2D or tetrahedra in 3D).
As approximating space we
take the  space for piecewise linear, with respect to $\mathbb{T}_h$, continuous polynomials \cite{Brenner:2008},
\[
W_{h} \equiv \{w_{h} \in \mathbb{H}^{1} | w_{h}|_{\tau} \in \mathbb{P}^{1} \text{ for all  }\tau \text{ in } \mathbb{T}_{h}  \} \subset \mathbb{H}^{1} $$
$$ V_{h,\Gamma_{D}} \equiv \{v_{h} \in W_{h}| v|_{\Gamma_{D}}=h|_{\Gamma_{D}}  \}
\]
The finite element solution to the Heat-PNP system then
is defined using these finite element spaces. As a time marching scheme we choose the
backward Euler scheme which is implicit and therefore stable. 
The discrete variational form then is: Find $ e^{\eta^{j}_{i,h}} \in W_h , \xi^{j}_{h} \in W_h$ and
$\phi_{h}^{j} \in V_{h,\Gamma_{D}}$
\begin{equation}\label{discrete}
\begin{cases}
\displaystyle  (\epsilon\nabla\phi_{h}^{j},\nabla v_{h}) + \langle \kappa \phi_{h}^{j} ,v_h \rangle_{\Gamma_{R}}- \sum_{i = 1}^{N} q_{i} (e^{\eta^{j}_{i,h}},v_h) = \langle C,v_h \rangle_{\Gamma_{R}}+ \langle S,v_h\rangle_{\Gamma_{N}} \\
\displaystyle \frac{1}{\Delta t_{j}}(e^{\eta^{j}_{i,h}},w_h) + (\frac{\nabla(e^{\eta^{j}_{i,h}}e^{\xi^{j}_{h}}+q_{i}\phi_{h}^{j})}{\nu_{i}},\nabla w_h) = \frac{1}{\Delta t_{j}}(e^{\eta^{j-1}_{i,h}},w_h)  \\
\displaystyle \frac{1}{\Delta t_{j}} (\sum_{i = 1}^{N} k_BC_i e^{\eta^{j}_{i,h}} e^{\xi^{j}_{h}}\xi^{j}_{h},w_h) - (\sum_{i = 1}^{N} k_BC_i e^{\xi^{j}_{h}} u_{i,h}^{j},\nabla(\eta^{j}_{i,h},w_h)) + (k\nabla e^{\xi^{j}_{h}},\nabla w_h) = (\sum_{i=1}^N \nu_i e^{\eta^{j}_{i,h}} |u_{i,h}^{j}|^2,w_h) +  \\
 \frac{1}{\Delta t_{j}} (\sum_{i = 1}^{N} k_BC_i e^{\eta^{j-1}_{i,h}} e^{\xi^{j}_{h}}\xi^{j-1}_{h},w_h)
  \end{cases}
 \end{equation}
The approximation to the initial conditions uses the standard interpolation
operator $I_{h}$ and is as follows:
\begin{eqnarray*}
  &&\eta_i(x,0) = I_{h}(\log (\rho_{i,0}(x))) \text{ for x }\in  \Omega,\text{i = 1,...,N}\\
  && \xi(x,0) = I_{h}(\log(T_{0}(x)))  \text{ for x }\in  \Omega,
\end{eqnarray*}

\subsection{A discrete energy estimate}
Next result shows a discrete energy estimate which holds in case  when the mesh is quasi-uniform mesh and the stiffness matrix on each
Picard iteration is an $M$-matrix. In this case,
we can show that the corresponding discrete solution $ T_{h}^{j}$ satisfies maximum principle and with this condition satisfied, we have discrete energy estimate and mass conservation. 
\begin{thm}\label{t:energy_estimate}
  Suppose $\eta_{i,h}^{j} \in W_{h}$ and
  $\phi_{h}^{j} \in V_{h,\Gamma_{D}}$ satisfy equations \eqref{discrete} for $i =1 ,..,N$
and the assumptions for the discrete maximum principle of temperature equation
are satisfied. Then the mass is conserved for each ion species
\begin{equation}
\int_{\Omega} e^{\eta^{j}_{i,h}(x,t)}dx = \int_{\Omega} e^{\eta^{0}_{i,h}(x,t)}dx,\text{ for } i = 1,..,N , j = 1,...m.
\label{discrete_mass_conservation}
\end{equation}
Moreover, the discrete analogue of the energy estimate
(second law of thermodynamics) holds
\begin{eqnarray*}
  \lefteqn{\sum_{i = 1}^{N}\int_\Omega e^{\eta^{j}_{i,h}}( \eta^{j}_{i,h} -\xi^{j}_{h}-1) dx}\\
  && + 
\sum_{j = 1}^{m}\Delta t_{j}\sum_{i = 1}^{N} \int_{\Omega} -\frac{e^{\eta^{j}_{i,h}} |u_i|^2}{e^{\xi^{j}_{h}}}+|k\nabla e^{\xi^{j}_{h}}|^2 \cdot(1+Mh^{\frac{1}{2}}) \\
&\leq &
\sum_{i = 1}^{N}\int_\Omega  e^{\eta^{0}_{i,h}}( \eta^{0}_{i,h} -\xi^{0}_{h}-1) dx 
\end{eqnarray*}
where $\|e^{-\xi^{j}_{h}}\|_{W^{2,\infty}} \leq M$ and h is the mesh size.
\end{thm}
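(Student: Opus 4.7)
The plan is to mirror the continuous derivation of~\eqref{energy_law_T_weak} at the discrete level, with two essential modifications: backward-in-time differences replace $\partial_t$, so the chain rule must be replaced by a convexity inequality; and admissible test functions must lie in $W_h$, so the test function $e^{-\xi_h^j}$ used in the continuous proof must be replaced by its nodal interpolant $I_h e^{-\xi_h^j}$, generating a consistency error that must be absorbed into the dissipation. Throughout, the discrete maximum principle of the EAFE temperature scheme is used to guarantee positivity and uniform boundedness of $\xi_h^j$ (hence of $e^{\pm\xi_h^j}$), which is what makes all the exponential/logarithmic manipulations meaningful.

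Mass conservation is the easy step. Taking $w_h\equiv 1\in W_h$ in the discrete continuity equation kills the flux/gradient term, and the zero-flux boundary condition kills any boundary contribution, leaving $\int_\Omega e^{\eta_{i,h}^j}\,dx = \int_\Omega e^{\eta_{i,h}^{j-1}}\,dx$. Iterating down to $j=0$ yields~\eqref{discrete_mass_conservation}.

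For the energy inequality, I would choose $w_h=\eta_{i,h}^j-\xi_h^j\in W_h$ in the discrete continuity equation and $w_h=I_h(e^{-\xi_h^j})$ in the discrete temperature equation, then add the resulting identities and sum over $i$ and over the time index $j=1,\ldots,m$. As in the continuous calculation following~\eqref{energy_law_T_weak}, the flux--gradient cross term of the continuity test will combine with the convective term of the temperature test to produce the dissipation $\sum_i e^{\eta_{i,h}^j}|u_{i,h}^j|^2/e^{\xi_h^j}$, while the two time-difference contributions will assemble into the discrete time-difference of the entropy functional. The key algebraic step replacing the continuous chain rule is the discrete convexity bound for $(\eta,\xi)\mapsto e^{\eta}(\eta-\xi-1)$, which is convex in $\eta$ and affine in $\xi$, giving the pointwise one-sided estimate
\begin{equation*}
(e^{\eta^j}-e^{\eta^{j-1}})(\eta^j-\xi^j)-e^{\eta^{j-1}}(\xi^j-\xi^{j-1})\;\geq\; e^{\eta^j}(\eta^j-\xi^j-1)-e^{\eta^{j-1}}(\eta^{j-1}-\xi^{j-1}-1).
\end{equation*}
Integrating and telescoping in $j$ produces exactly the boundary-in-time entropy term appearing on the left-hand side of the claimed estimate.

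The main obstacle is the consistency error incurred by using $I_h e^{-\xi_h^j}$ instead of $e^{-\xi_h^j}$, particularly in the diffusion term $(k\nabla e^{\xi_h^j},\nabla w_h)$. Splitting $I_h e^{-\xi_h^j}=e^{-\xi_h^j}+r_h$, the unperturbed piece produces $-k\int|\nabla\xi_h^j|^2\,dx$, which after using $e^{\xi_h^j}e^{-\xi_h^j}\equiv 1$ and the maximum-principle bound on $\xi_h^j$ is exactly the dissipation $|k\nabla e^{\xi_h^j}|^2/e^{\xi_h^j}$. The remainder $r_h$ is controlled by the standard interpolation estimate $\|\nabla r_h\|_{L^2}\lesssim h\,\|e^{-\xi_h^j}\|_{H^2}$ combined with a quasi-uniform inverse inequality and the hypothesis $\|e^{-\xi_h^j}\|_{W^{2,\infty}}\le M$, yielding a correction of order $Mh^{1/2}\,\|k\nabla e^{\xi_h^j}\|_{L^2}^2$ that is absorbed into the $(1+Mh^{1/2})$ factor multiplying the diffusion contribution in the statement. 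The analogous but more benign interpolation errors arising in the convective and reaction terms are controlled identically, and the non-smooth pointwise convexity inequality is justified on each simplex by noting that all quantities are piecewise smooth with respect to $\mathbb{T}_h$.
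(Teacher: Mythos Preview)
Your proposal follows essentially the same route as the paper: test with $w_h\equiv 1$ for mass conservation, test the discrete temperature equation with $I_h(e^{-\xi_h^j})$, bring in the continuity equation to handle the convective term (the paper substitutes it into its term~II rather than testing explicitly with $\eta_{i,h}^j-\xi_h^j$, but this is the same manipulation), control the interpolation remainder $\nabla(I_h e^{-\xi_h^j}-e^{-\xi_h^j})$ in the diffusion term via the hypothesis $\|e^{-\xi_h^j}\|_{W^{2,\infty}}\le M$, and telescope in $j$. The paper also uses that $I_h(e^{-\xi_h^j})e^{\xi_h^j}\ge 1$ (piecewise linear interpolation of a convex function) to drop that factor at the end, which corresponds to your remark that the maximum-principle bounds make the exponential factors harmless.

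One small correction to your justification: the function $\eta\mapsto e^{\eta}(\eta-\xi-1)$ is \emph{not} convex in $\eta$ in general, since its second derivative is $e^{\eta}(\eta-\xi+1)$, which is negative whenever $\eta<\xi-1$. Your displayed pointwise inequality is nevertheless correct, because
\[
\text{LHS}-\text{RHS}
= e^{\eta^j}-e^{\eta^{j-1}}-e^{\eta^{j-1}}(\eta^j-\eta^{j-1})\;\ge\;0,
\]
which is simply the convexity of $x\mapsto e^{x}$. So the argument stands; only the attribution of convexity needs to be changed from ``$e^{\eta}(\eta-\xi-1)$ is convex in $\eta$'' to ``$e^{x}$ is convex.''
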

\begin{proof}
For simplicity, let $\nu_{i}=1$, $C_{i}=1$, $z_{i}=1$, $q=1$, and
$k_{B}=1$, and the corresponding discrete energy estimate is done as
follows. First, we choose $w_{h} \equiv 1 \in W_{h}$ in the first
equation of (\ref{discrete}) and this gives us:
\[ \frac{1}{\Delta t_{j}}\int_{\Omega} e^{\eta^{j}_{i,h}} -e^{\eta^{j-1}_{i,h}} = 0
\]
For the energy estimate, we test the last equation in (\ref{discrete}) with $w_{h} = I_h(e^{-\xi^{j}_{i,h}}) \in W_{h} $. The latter is
a valid test function because $-\xi^{j}_{i,h} \in W_{h}$ and $I_{h}$ is the interpolation operator mapping to
to the piecewise linear, continuous $W_h$.This gives us:
\begin{eqnarray*}
   &&  \underbrace{\frac{1}{\Delta t_{j}}
    \left(\sum_{i = 1}^{N} e^{\eta^{j+1}_{i,h}} e^{\xi^{j}_{h}}\xi^{j+1}_{h},I_{h}(e^{-\xi^{j}_{h}})\right)
    -\frac{1}{\Delta t_{j}} \left(\sum_{i = 1}^{N} e^{\eta^{j}_{i,h}} e^{\xi^{j}_{h}}\xi^{j}_{h},I_{h}(e^{-\xi^{j}_{h}})\right)}_{\text{I}} \\
  &&- \underbrace{(\sum_{i = 1}^{N} e^{\xi^{j}_{h}} u_{i,h}^{j},\nabla(e^{\eta^{j}_{i,h}}I_{h}(e^{-\xi^{j}_{h}})))}_{\text{II}}\\
  && + \underbrace{(k\nabla e^{\xi^{j}_{h}},\nabla I_{h}(e^{-\xi^{j}_{h}}))}_{\text{III}} \\
  &=&\underbrace{(\sum_{i=1}^N \nu_i e^{\eta^{j}_{i,h}} |u_{i,h}^{j}|^2,\frac{1}{e^{\xi^{j}_{h}}})}_{\text{IV}}
\end{eqnarray*}

We analyze each term in the above equation sum and we begin by rewriting 
$\text{I}$  on the left hand side. We have
\begin{eqnarray*}
\text{I}& = &\frac{1}{\Delta t_{j}} (\sum_{i = 1}^{N} e^{\eta^{j+1}_{i,h}} e^{\xi^{j}_{h}}\xi^{j+1}_{h},I_{h}(e^{-\xi^{j}_{h}}))-\frac{1}{\Delta t_{j}} (\sum_{i = 1}^{N} e^{\eta^{j}_{i,h}} e^{\xi^{j}_{h}}\xi^{j}_{h},I_{h}(e^{-\xi^{j}_{h}}))\\
& = & \frac{1}{\Delta t_{j}}((\sum_{i = 1}^{N} e^{\eta^{j+1}_{i,h}},\xi^{j+1}_{h})-(\sum_{i = 1}^{N} e^{\eta^{j}_{i,h}},\xi^{j}_{h}))(I_{h}(e^{-\xi^{j}_{h}})e^{\xi^{j}_{h}})
\end{eqnarray*}
Next, we rewrite also $\text{II}$ using integration by parts and
the zero flux boundary conditions:
\begin{eqnarray*}
  \text{II}&=&-(\sum_{i = 1}^{N} e^{\xi^{j}_{h}} u_{i,h}^{j},\nabla(e^{\eta^{j}_{i,h}}I_{h}(e^{-\xi^{j}_{h}})))\\
  &=& (\sum_{i = 1}^{N} \nabla (e^{\xi^{j}_{h}} u_{i,h}^{j})e^{\eta^{j}_{i,h}}I_{h}(e^{-\xi^{j}_{h}}))\\
& =&  (I_{h}(e^{-\xi^{j}_{h}})e^{\xi^{j}_{h}})\cdot((\sum_{i = 1}^{N} u_{i,h}^{j} e^{\eta^{j}_{i,h}},\nabla \xi^{j}_{h})+(\sum_{i = 1}^{N} \nabla u_{i,h}^{j} e^{\eta^{j}_{i,h}},1) )\\
& =&(I_{h}(e^{-\xi^{j}_{h}})e^{\xi^{j}_{h}})\cdot( \frac{1}{\Delta t_{j}} (e^{\eta^{j+1}_{i,h}},\eta^{j+1}_{i,h}-\xi^{j}_{h}-1)- \frac{1}{\Delta t_{j}} (e^{\eta^{j}_{i,h}}, \eta^{j}_{i,h}-\xi^{j}_{h}-1))
\end{eqnarray*}
In the last step we used the discretized continuity equation to estimate
term $\text{III}$ on the left hand side as follows:
\begin{eqnarray*}
  \text{III}&=&(k\nabla e^{\xi^{j}_{h}},\nabla I_{h}(e^{-\xi^{j}_{h}}))\\
  &=& k|\nabla e^{\xi^{j}_{h}}|^2-(k\nabla e^{\xi^{j}_{h}},\nabla I_{h}(e^{-\xi^{j}_{h}})-\nabla e^{-\xi^{j}_{h}})\\
&\leq & k|\nabla e^{\xi^{j}_{h}}|^2+k\|\nabla e^{\xi^{j}_{h}}\|_2\cdot \| \nabla I_{h}(e^{-\xi^{j}_{h}})-\nabla e^{-\xi^{j}_{h}} \|_{2}
\end{eqnarray*}
Here we have used the
assumption that $e^{-\xi^{j}} \in W^{2,\infty}$, $e^{-\xi^{j}_{h}} \in W^{2,\infty}$ for any $j$ and that these constants
are uniformly bounded by the constant $M$.
As shown in \cite[Theorem 3.1.6]{Ciarlet:2002} we have the following interpolation estimate
\begin{align*}
\|\nabla I_{h}(e^{-\xi^{j}_{h}})-\nabla e^{-\xi^{j}_{h}}\|_{L^{2}} & \leq Ch | e^{-\xi^{j}_{h}} | _{2,\Omega} = C(\Omega)hM.
\end{align*}
term4 on the right hand side:
\begin{align*}
\left(\sum_{i=1}^N e^{\eta^{j}_{i,h}} |u_{i,h}^{j}|^2,\frac{1}{e^{\xi^{j}_{h}}}\right) = \left(\frac{\sum_{i=1}^N e^{\eta^{j}_{i,h}} |u_{i,h}^{j}|^2}{e^{\xi^{j}_{h}}}\right)(I_{h}(e^{-\xi^{j}_{h}})e^{\xi^{j}_{h}})\end{align*}
Combining all the terms above and by a discrete Gr$\ddot{\o}$nwall argument of telescopic sum from time step t=0 to m and, as a result, we obtain the following energy estimate:
\begin{eqnarray*}
\sum_{j=1}^m\Delta t_j
  \left(\text{I}+\text{II}\right)&\ge & \sum_{j=1}^m\Delta t_j \left(\text{IV}-(Ch| I_{h}(e^{-\xi^{j}_{h}})- e^{-\xi^{j}_{h}} | _{2,\Omega}\cdot\|\nabla e^{\xi^{j}_{h}} \|_{2})\right)
\end{eqnarray*}
  only keep the term $\sum_{i = 1}^{N}\int_\Omega  e^{\eta^{0}_{i,h}}( \eta^{0}_{i,h} -\xi^{0}_{h}-1) dx $ in \text{I} to the right side, we have:
\begin{eqnarray*}
  \lefteqn{\sum_{i = 1}^{N}\int_\Omega e^{\eta^{j}_{i,h}}( \eta^{j}_{i,h} -\xi^{j}_{h}-1) dx }\\
  & + &
  \sum_{j = 1}^{m}\Delta t_{j}\sum_{i = 1}^{N} \int_{\Omega} -\frac{e^{\eta^{j}_{i,h}} |u_i|^2}{e^{\xi^{j}_{h}}}+|k\nabla e^{\xi^{j}_{h}}|^2 \cdot(\frac{1}{(I_{h}(e^{-\xi^{j}_{h}})e^{\xi^{j}_{h}})}) \\
  &+ &\frac{1}{(I_{h}(e^{-\xi^{j}_{h}})e^{\xi^{j}_{h}})}\cdot(Ch| I_{h}(e^{-\xi^{j}_{h}})- e^{-\xi^{j}_{h}} | _{2,\Omega}\cdot\|\nabla e^{\xi^{j}_{h}} \|_{2}) dx \\
&\le & \sum_{i = 1}^{N}\int_\Omega  e^{\eta^{0}_{i,h}}( \eta^{0}_{i,h} -\xi^{0}_{h}-1) dx 
\end{eqnarray*}

divide the equation by $(I_{h}(e^{-\xi^{j}_{h}})e^{\xi^{j}_{h}})$ on both side, use  $(I_{h}(e^{-\xi^{j}_{h}})e^{\xi^{j}_{h}}\ge 1$
and notice $I_{h}$ is a piecewise linear interpolant of a convex
(the exponential) function shows that we can drop the term
$(I_{h}(e^{-\xi^{j}_{h}})e^{\xi^{j}_{h}}$ from the equation above and obtain the final estimate:

\begin{eqnarray*}
  \lefteqn{\sum_{i = 1}^{N}\int_\Omega e^{\eta^{j}_{i,h}}( \eta^{j}_{i,h} -\xi^{j}_{h}-1) dx}\\
  &&+ \sum_{j = 1}^{m}\Delta t_{j}\sum_{i = 1}^{N} \int_{\Omega} -\frac{e^{\eta^{j}_{i,h}} |u_i|^2}{e^{\xi^{j}_{h}}}
  +|k\nabla e^{\xi^{j}_{h}}|^2 \cdot(\frac{1+ChM }{(I_{h}(e^{-\xi^{j}_{h}})e^{\xi^{j}_{h}})}) dx \\
  & \leq &
\sum_{i = 1}^{N}\int_\Omega  e^{\eta^{0}_{i,h}}( \eta^{0}_{i,h} -\xi^{0}_{h}-1) dx.
\end{eqnarray*}
\end{proof}

In order to show that our numerical model meets the criteria of
Theorem~\ref{t:energy_estimate}, we need to show thatthe discrete
solution for the nonlinear temperature equation 
has an $L^2$ norm which is bounded below,
uniformly with respect to time. To see this, observe that 
on each time step, we are solving a nonlinear drift diffusion
equation for the temperature. We linearize these equations
using a Picard iteration. The solutions to the Picard iteration
satisfy the discrete maximum principle because we used EAFE discretization
for the linearized equations. 

\subsection{Fixed-Point Iteration}
We next write out the Picard  iteration algorithm for the temperature equation
which we solve each time step. 
\begin{algorithm}
\caption{Fixed Point Iteration for temperature equation}
\begin{algorithmic}[1]
\For{j = 1,2,3,...T}{\Comment time iteration}
    \State Get the solution from last time step: $\xi_{h}^{j-1},\eta^{j-1}_{i,h},\phi^{j-1}_{h}$;
    \State Choose a small number $\epsilon$ and set $\xi^{0} = \xi_{h}^{i-1}$\;
    \For {k = 1,2,3,...maxiter}{ \Comment{Non-linear picard iteration:}
    	\State Solve the linearized concentration equation and the Poisson equation using the temperature term from last nonlinear iteration step $\xi^{k-1}$;
     	\State Solve the linearized equation for temperature with the given dirichlet boundary condition\;
     
     	\If{ $\|\xi^{k}-\xi^{k-1} \|_{2} \leq \epsilon$ \textbf{and} $\|\eta_{i}^{k}-\eta_{i}^{k-1}\|_{2} \leq \epsilon $ for all i   } \Comment{Convergence Criteria}
     		\State {Stop}
    	\Else
    		\State {k=k+1}
    	\EndIf
	\EndFor
\EndFor
}
\end{algorithmic}
\end{algorithm}
Here, the linearized concentration equations are:
\begin{align*}
\displaystyle \frac{1}{\Delta t_{j}}(e^{\eta^{j,k}_{i,h}},w_h) + (\frac{\nabla(e^{\eta^{j,k}_{i,h}}e^{\xi^{j,k-1}_{h}}+q_{i}\phi_{h}^{j,k-1})}{\nu_{i}},\nabla w_h) = \frac{1}{\Delta t_{j}}(e^{\eta^{j-1}_{i,h}},w_h) \text{  for i  = 1,2,...N } \\
\end{align*}

The linearized temperature equation is :
\begin{eqnarray*}
\lefteqn{\frac{1}{\Delta t_{j}} (\sum_{i = 1}^{N} k_BC_i e^{\eta^{j}_{i,h}} e^{\xi^{j,k}_{h}},w_h) - (\sum_{i = 1}^{N} k_BC_i e^{\xi^{j}_{h}} u_{i,h,k-1}^{j},\nabla(\eta^{j}_{i,h},w_h)) + (k\nabla e^{\xi^{j}_{h}},\nabla w_h)}\\
  &=& (\sum_{i=1}^N \nu_i e^{\eta^{j}_{i,h}} |u_{i,h,k-1}^{j}|^2,w_h)
+ \frac{1}{\Delta t_{j}} (\sum_{i = 1}^{N} k_BC_i e^{\eta^{j-1}_{i,h}} e^{\xi^{j-1}_{h}},w_h)
\end{eqnarray*}
here, $ u_{i,h,k-1}^{j} $ has the meaning of we use the temperature from $(k-1)$-st fixed point iteration step to compute the discretized velocity term. The superscript $k$ stands for the nonlinear iteration step. 

\subsection{Discrete Maximum Principle with EAFE stabilization}

For the sake of clarity, we change the variables back and use the
original density and temperature variable. It is easy to see that the
corresponding continuous temperature equation in each Picard iteration
has this form:
\begin{eqnarray*}
  \lefteqn{\frac{1}{\Delta t}\left(\sum_{i=1}^N \rho_{i,h}^{j,k}\right) T_{h}^{j,k} + \left(\sum_{i=1}^N  \rho_{i,h}^{j.k} \vec{u_{i,h}^{j,k-1}} \right) \cdot \nabla T_{h}^{j,k} + \left(\sum_{i=1}^N \rho_{i,h}^{j,k}  \nabla \cdot \vec{u_{i,h}^{j,k-1}}\right) T_{h}^{j,k}}\\
    & = & \nabla \cdot k \nabla T_{h}^{j,k} +  \sum_{i=1}^N \nu_i\rho_{i,h}^{j,k} |\vec{u_{i,h}^{j,k}}|^2+ \frac{1}{\Delta t}\left(\sum_{i=1}^N \rho_{i,h}^{j,k}\right) T_{h}^{j,k-1} 
\end{eqnarray*}
This can be written in a  more concise form:
\[
L T_{h}^{j,k} =  \sum_{i=1}^N \nu_i\rho_{i,h}^{j,k} |\vec{u_{i,h}^{j,k}}|^2,
\]
where the differential operator for the temperature equation:
\[
Lu=-k\sum_{i,j=1}^{n}u_{x_{i}x_{j}}+\sum_{i=1}^{n}b^{i}u_{x_{i}}+cu
\]
and $c=\frac{1}{\Delta t}-\left(\sum_{i=1}^N \rho_{i,h}^{j,k} \nabla
\cdot \vec{u_{i,h}^{j,k-1}}\right)$, $ b^{i} = \left(\sum_{i=1}^N
\rho_{i,h}^{j,k} \vec{u_{i,h}^{j,k-1}} \right)^{i} $.  It is crucial
that the time step is chosen so that the parameter $c$ in front of the
temperature term is always positive because this implies that the
linearized differential operator is uniformly elliptic. Also the right
hand side is always positive, and hence, 
\[
L T_{h}^{j,k} \geq 0  \text{      in    } \Omega,
\]
If we assume that the linearized
temperature equation
has a classical solution in $C^{2}(\Omega) \cap C(\bar{\Omega})$, according to~\cite{Evans:2010} our solution  satisfies weak maximum principle\cite{Evans:2010}
 and the minimum of the temperature
 must be attained at the boundary.
 In applications the this Dirichlet boundary condition is
 always positive which implies that
 we have a uniform upper bound on $\|T_{h}^{j}\|_{2}$ for all time steps.

 In order to meet the criteria for the discrete energy estimate in
 Theorem~\ref{t:energy_estimate}, the inverse of discretized
 temperature solution from the nonlinear solver at each time step need
 to satisfy a global upper bound in the $L^2$ sense.  This requires a
 special numerical treatment of the temperature equation. Inspired by
 techniques introduced in a recent work~\cite{Liu:2015}, we maintain a
 discrete maximum principle for the advection diffusion equation with
 source term in the following way: To do this we rewrite the
 linearized equations for temperature in divergence form and then use
 the EAFE scheme to guarantee that the discrete maximum principle is
 satisfied and the maximum of the temperature is on the boundary.
\begin{equation}
\nabla(-k\nabla T_{h}^{j,k}+ \sum_{i=1}^N \rho_{i,h}^{j,k}  \vec{u_{i,h}^{j,k-1}}T_{h}^{j,k})+c T_{h}^{j,k} = G(\vec{u_{i,h}^{j,k-1}},\rho_{i,h}^{j,k},T_{h}^{j,k-1}).
\end{equation}
here, $c=c(\Delta t, \rho_{i,h}^{j,k},\vec{u_{i,h}^{j,k-1}} ) = \frac{1}{\Delta t}-\left(\sum_{i=1}^N \nabla \rho_{i,h}^{j,k}   \vec{u_{i,h}^{j,k-1}}\right)$,  and $G$ is always positive. The corresponding weak form for the linearized equation is 
\begin{equation}
(k\nabla T_{h}^{j,k}+ \sum_{i=1}^N \rho_{i,h}^{j,k}  \vec{u_{i,h}^{j,k-1}}T_{h}^{j,k},\nabla w)+ (cT_{h}^{j,k},w) = (G(\vec{u_{i,h}^{j,k-1}},\rho_{i,h}^{j,k},T_{h}^{j,k-1}),w)
\end{equation}
We apply mass lumping to the term $(cT_{h}^{j,k},w)$ , using nodal interpolant $I_{h}: W \rightarrow W_{h}$,
$$(cT_{h}^{j,k},w)_{h} = \int_{\Omega} I_{h}((\frac{1}{\Delta t}-\left(\sum_{i=1}^N \nabla \rho_{i,h}^{j,k}   \vec{u_{i,h}^{j,k-1}}\right) )T_{h}^{j,k} )I_{h}(w)dx $$
and EAFE approximation to the flux term:
\begin{align*}
a_{h}(k\nabla T_{h}^{j,k}+ \sum_{i=1}^N \rho_{i,h}^{j,k} \vec{u_{i,h}^{j,k-1}}T_{h}^{j,k},w ):=&\sum_{\tau \in \mathrm{T}_{h}}\sum_{E \in \tau} \omega^{\tau}_{E}\frac{|E|}{\int_{E}e^{c(\Delta t, \rho_{i,h}^{j,k},\vec{u_{i,h}^{j,k-1} )}ds}}\delta_{E}(e^{c(\Delta t, \rho_{i,h}^{j,k},\vec{u_{i,h}^{j,k-1} })}T_{h}^{j,k})\delta_{E}(w) \\
&\approx (k\nabla T_{h}^{j,k}+ \sum_{i=1}^N \rho_{i,h}^{j,k}  \vec{u_{i,h}^{j,k-1}}T_{h}^{j,k},\nabla w).
\end{align*}
The changes in notation used above are summarized as follows:
\begin{itemize}
\item $ \mathbb{T}_{h} $ is either the triangulation in 2d or tetrahedron in 3d.
\item $ \tau$ stands for the simplex 
\item  E stands for the edges
\item $\omega_{E}$ is the entries in the stiffness matrix for the Laplace equation. 
\end{itemize}
The detailed derivation and proof of monotonicity as well as the restrictions on the mesh needed for such a proof are found in \cite{Xu:1999}. 

As a result we have the following theorem providing
discrete energy estimate. 
\begin{thm}
Suppose that the Dirichlet boundary condition for temperature are
strictly positive and no-flux boundary condition is imposed for each
ion density our nonlinear Picard iteration approach using EAFE scheme
with appropriate small time step satisfies the discrete energy
estimate:
\begin{eqnarray*}
  \lefteqn{\sum_{i = 1}^{N}\int_\Omega e^{\eta^{j}_{i,h}}( \eta^{j}_{i,h} -\xi^{j}_{h}-1) dx}\\
  && +
\sum_{j = 1}^{m}\Delta t_{j}\sum_{i = 1}^{N} \int_{\Omega} \frac{e^{\eta^{j}_{i,h}} |u_i|^2}{e^{\xi^{j}_{h}}}
+|k\nabla e^{\xi^{j}_{h}}|^2 \cdot(1+Mh^{\frac{1}{2}}) dx\\
& \leq & 
\sum_{i = 1}^{N}\int_\Omega  e^{\eta^{0}_{i,h}}( \eta^{0}_{i,h} -\xi^{0}_{h}-1) dx 
\end{eqnarray*}
where $\|e^{-\xi^{j}_{h}}\|_{W^{2,\infty}} \leq M$ and h is the mesh size.
and also the conservation of charge is satisfied:
\begin{equation}
\int_{\Omega} e^{\eta^{j}_{i,h}(x,t)}dx = \int_{\Omega} e^{\eta^{0}_{i,h}(x,t)}dx,\text{ for } i = 1,..,N , j = 1,...m
\end{equation}
\end{thm}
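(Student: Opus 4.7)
The plan is to reduce this theorem to Theorem~\ref{t:energy_estimate} by verifying its two hypotheses: that the Picard limit solves the discrete system~\eqref{discrete}, and that the discrete maximum principle for the temperature equation holds and yields the uniform bound $\|e^{-\xi^{j}_{h}}\|_{W^{2,\infty}} \le M$ required there. Once both are in place, the energy estimate and charge conservation are immediate corollaries of Theorem~\ref{t:energy_estimate}.

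First, I would argue that on each time step the Picard iteration does converge (for sufficiently small $\Delta t$ one shows the fixed-point map is a contraction in $L^{2}$ on an invariant ball, using the positivity bound on $T_{h}^{j,k}$ proved below). The limit $(\eta_{i,h}^{j},\xi_{h}^{j},\phi_{h}^{j})$ then satisfies~\eqref{discrete} by continuity of the nonlinear terms, and mass conservation follows verbatim from the argument in the proof of Theorem~\ref{t:energy_estimate} by choosing $w_{h}\equiv 1$ in the discrete continuity equation.

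Second, and this is the main technical step, I would establish a uniform positive lower bound for $T_{h}^{j}$. Written in divergence form, the linearized temperature equation at Picard step $k$ is an advection--diffusion--reaction equation with reaction coefficient $c=\frac{1}{\Delta t}-\bigl(\sum_{i}\nabla\rho_{i,h}^{j,k}\cdot\vec{u}_{i,h}^{j,k-1}\bigr)$ and strictly positive right-hand side $G$. For $\Delta t$ small enough (uniformly in $j$ thanks to the a priori $L^{\infty}$ control inherited from the previous step) we have $c>0$. The EAFE discretization together with mass lumping of the reaction term then produces an $M$-matrix on any quasi-uniform Delaunay-type mesh (see \cite{Xu:1999}), so the discrete maximum principle holds: $T_{h}^{j,k}$ attains its minimum on $\partial\Omega$. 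Since the Dirichlet datum $\delta T$ is bounded below by a strictly positive constant $T_{\min}$ independent of $h$, $j$, and $k$, we conclude $T_{h}^{j,k}\ge T_{\min}>0$ uniformly; passing to the Picard limit gives $\|e^{-\xi_{h}^{j}}\|_{L^{\infty}}\le T_{\min}^{-1}$.

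The hard part is upgrading this $L^{\infty}$ bound to the $W^{2,\infty}$ bound assumed in Theorem~\ref{t:energy_estimate}. Since $\xi_{h}^{j}$ is piecewise linear, on each simplex $\tau\in\mathbb{T}_{h}$ one has $|D^{2}e^{-\xi_{h}^{j}}|\le e^{-\xi_{h}^{j}}|\nabla\xi_{h}^{j}|^{2}$, so it suffices to control $\|\xi_{h}^{j}\|_{W^{1,\infty}}$ uniformly. For this I would invoke a discrete regularity result for EAFE solutions of the advection--diffusion equation on a quasi-uniform mesh, combined with the lower bound $T_{h}^{j}\ge T_{\min}$ and the standard inverse inequality on $W_{h}$. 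The main obstacle is precisely this step, since generically an EAFE solution only sits in $W_{h}\subset H^{1}\cap L^{\infty}$ and the piecewise $W^{2,\infty}$ estimate must be extracted from a discrete elliptic regularity argument adapted to the drift term. Once the constant $M$ is obtained, plugging into Theorem~\ref{t:energy_estimate} yields the stated discrete energy estimate and conservation of charge.
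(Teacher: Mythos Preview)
Your reduction strategy is exactly the paper's: the theorem is stated immediately after the EAFE/discrete maximum principle discussion as a direct consequence of Theorem~\ref{t:energy_estimate}, and the paper supplies no separate proof beyond that discussion. The argument there is precisely what you outline --- write the linearized temperature equation in divergence form, choose $\Delta t$ small so that the zeroth-order coefficient $c$ is positive, observe the right-hand side $G$ is nonnegative, invoke the $M$-matrix property of EAFE plus mass lumping (citing \cite{Xu:1999}) to get a discrete maximum principle, and conclude from the strictly positive Dirichlet datum that $T_h^{j,k}$ is uniformly bounded below.

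Where you go beyond the paper is in the last paragraph. You correctly flag the passage from an $L^\infty$ lower bound on $T_h^j$ to the uniform $W^{2,\infty}$ bound on $e^{-\xi_h^j}$ as the genuine analytical obstacle, and sketch a route via discrete elliptic regularity and inverse inequalities. The paper does \emph{not} carry out this step: in both Theorem~\ref{t:energy_estimate} and the present theorem the bound $\|e^{-\xi_h^j}\|_{W^{2,\infty}}\le M$ is simply recorded as a standing hypothesis (``where $\ldots$''), not derived from the scheme. Likewise the paper does not argue convergence of the Picard iteration. So your proposal is faithful to, and in fact more scrupulous than, the paper's own treatment; just be aware that the $W^{2,\infty}$ control you are trying to supply is left open there and functions as an assumption rather than a conclusion.
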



\section{Numerical experiments}
The first numerical example is modeling an electric device consists of two kinds of ionic solutions: Na and Cl, and compute the current , density and temperature profile under given fixed Voltage and outside temperature. With initial condition $\rho_\pm(x,0)=\rho_0=0.06$ and $T(x,0)=1$. The dimensionless parameters are, $C_0 \rho_0=302$, $C_\pm=3$, $1/\nu_+=1.334$, $1/\nu_-=2.032$ \cite{lide:2004}, $\epsilon=1$, $l_B=0.714$. In order to highlight the contribution from temperature, we choose a relatively small heat conductance $k=100$. 
The computational domain is $[0,10]\times[0,1]$. The boundary condition for ion density and temperature are Dirichlet, i.e. $\rho_i(0,t)=\rho_i(L,t)=\rho_0$, $T(0,t)=T(L,t)=1$.

\subsection{Demonstrating Discrete Energy Dissipation} 
Notice that we need zero boundary flux for each of the ion species in order to have charge (mass) conservation and discrete energy estimate. In the numerical experiment presented in this section we are imposing Dirichlet boundary conditions on ion densities, and our system is no longer a closed system. Thus, the energy estimate needs to take into consideration the boundary flux. In such setting, the exact conservation law of entropy is as follows: 
\begin{equation}
\frac{d }{dt}S(V,t) + \int_{\partial V} \frac{j}{T} \cdot d\mathbf{r} - \int_{V} \frac{q}{T}  d\mathbf{r} -J_S=\Delta(V,t)\geq 0,
\end{equation}
The numerical results suggest that after a short time period the system
tends to its equilibrium state and the bulk entropy is always
increasing and tends to a limit. The entropy production cancels with
the boundary flux of entropy when equilibrium is reached. This is
exactly what we expected (see Figure~\ref{f:energy-law}).
\begin{figure}[!h]
\begin{center}
\includegraphics[width=0.45\textwidth]{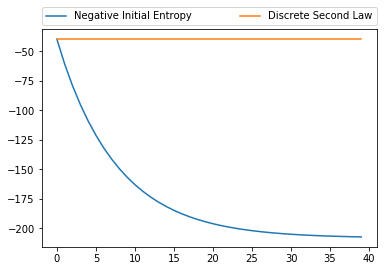}
\includegraphics[width=0.45\textwidth]{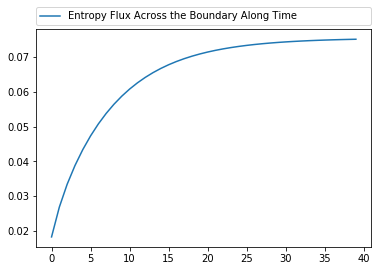}\\
\includegraphics[width=0.45\textwidth]{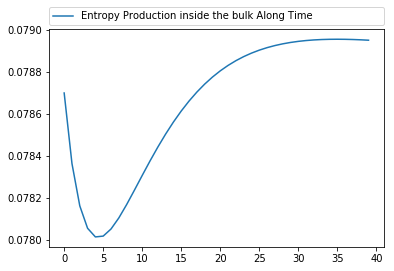}
\includegraphics[width=0.45\textwidth]{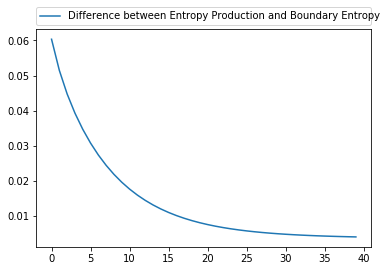}
\caption{When the system approaching to steady state. (a) Bulk Entropy is increasing and tends to a limit (b) Entropy Flux at the Boundary is decreasing to a limit (c) Dissipation is decreasing to a limit (d) The difference between Dissipation and Entropy Flux tends to zero\label{f:energy-law}}
\end{center}
\end{figure}

The steady state of PNP equation satisfies Poisson-Boltzmann (PB)
equation. But we can not directly compare our steady state solution to
PB solution since temperature effect is not considered there. To
validate the accuracy of our numerical method, we observe similar
steady-state solution with the PNP solution in previous
work\cite{Liu:2017}.

As is shown below, the temperature profile at steady-state is
concave. On the other hand the charged densities are convex. With
different increasing voltage by setting different electric potential
on each side of the tube, the current $I$  does not increase linearly as in
the non-isothermal case. This is more accurate model as  the increase
in voltage changes the temperature and the device we model becomes a
non-linear device (see Figure~\ref{f:nonlinear-device}.

\begin{figure}[!h]
\begin{center}
\includegraphics[width=0.45\textwidth]{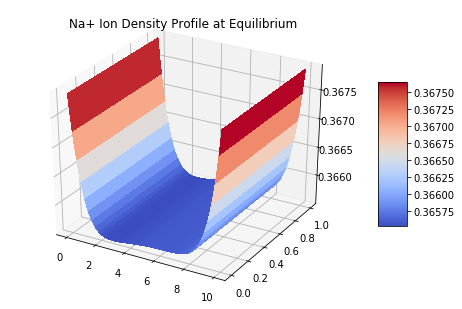}
\includegraphics[width=0.45\textwidth]{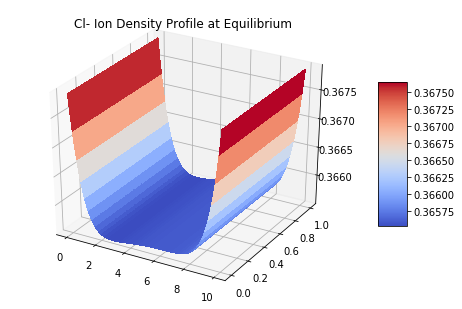}\\
\includegraphics[width=0.45\textwidth]{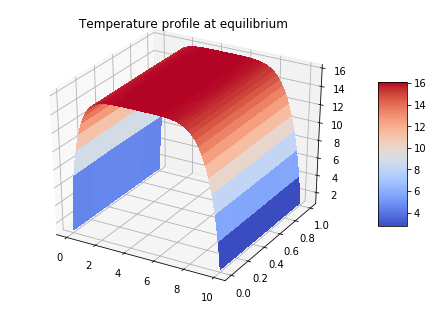}
\includegraphics[width=0.45\textwidth]{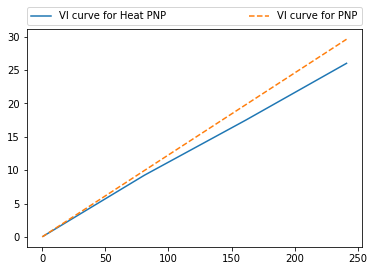}
\caption{When the system approaching to steady state. (a) $Na^+$ density distribution. (b)$Cl^-$ density distribution.(c) Temperature distribution. (d) Voltage-Current relation of the system.\label{f:nonlinear-device}}
\end{center}
\end{figure}

\subsection{Modeling electroosmosis flow}
We consider two ionic species with incompressible background fluid in an rectangular shape tube. 
Then the dynamic equations for the solute particles become,
\begin{equation}
\begin{cases}
\displaystyle \frac{\partial}{\partial t} \rho_i + \nabla \cdot (\rho_i u_i)=0 ,\\
\displaystyle m_i \rho_i (\frac{\partial u_i}{\partial t} + u_i \nabla  u_i)+\nabla P_i+\rho_i z_ie\nabla \phi=\nu_i \rho_i (u_0-u_i)+\nabla (\xi_i \nabla \cdot u_i) + \nabla \cdot \lambda_i \nabla u_i,\\
-\nabla \cdot \epsilon \nabla \phi =\sum_{m=1}^N \rho_m z_m e+\rho_f.
\end{cases}\hspace{-0.5cm}
\label{gPNP_solute}
\end{equation} 
Where $P_i$ is the thermodynamic pressure which we will be giving out later.
Here we use the fact that $-\nabla \cdot \epsilon \nabla v(\mathbf{r},\mathbf{r'}) = \delta(\mathbf{r}-\mathbf{r'})$, where $\epsilon$ is the dielectric constant. And $\rho_f = -\nabla \cdot \epsilon \nabla \psi$ describes the external field. 
For the incompressible solvent,
\begin{equation}
\begin{cases}
\displaystyle  m_0 \rho_0 \left(\frac{\partial}{\partial t} u_0 + u_0 \nabla  u_0\right)+\nabla P_0 +\rho_0 \nabla \phi_0=\sum_{i=1}^N\nu_i \rho_i (u_i-u_0) + \nabla \cdot \lambda_0 \nabla u_0,\\
  \nabla \cdot u_0 = 0.
\end{cases}
\label{gPNP_solvent}
\end{equation}

And the temperature equation,
\begin{align}
&\sum_{i=0}^N \left(- T \frac{\partial^2 \Psi_i}{\partial T^2}\right)\left( \frac{\partial T}{\partial t} +  u_i  \cdot \nabla T\right) + \left(\sum_{i=1}^N  \frac{\partial P_i}{\partial T}\nabla \cdot u_i \right)T  \nonumber\\=& \nabla \cdot k \nabla T +  \sum_{i=1}^N \nu_i \rho_i |u_i-u_0|^2 + \xi_i |\nabla \cdot u_i|^2 +\sum_{i=0}^N \lambda_i |\nabla u_i|^2 +q.
\label{gPNP_temp}
\end{align}

In~\eqref{gPNP_temp}, the term $ \sum_{i=0}^N \left(- T
\frac{\partial^2 \Psi_i}{\partial T^2}\right)$ can be viewed as the
weighted average heat capacitance of the system. The second term
represents the work of thermopressure transfer into heat. On the right
hand side, $ \nabla \cdot k \nabla T$ describes the heat diffusion. We
notice that the entropy production from mechanical viscosity appears as an
internal heat source.

Following the notation of \cite{Liu:2017}, the free energy of the whole system is:
\begin{eqnarray}
F(V,t)&=& \int_\Omega \Psi_0(\rho_0(\mathbf{r},t),T(\mathbf{r},t))+ \Psi_1(\rho_1(\mathbf{r},t),T(\mathbf{r},t))+ \Psi_2(\rho_2(\mathbf{r},t),T(\mathbf{r},t)) d\mathbf{r} \\
&&+\sum_{i,m=0}^N\frac{z_i z_m e^2}{2} \int_V \int_\Omega \rho_i(\mathbf{r},t) \rho_m(\mathbf{r'},t) v(\mathbf{r},\mathbf{r'}) d\mathbf{r}d \mathbf{r'}\nonumber\\
&&+ \sum_{i=0}^N z_i e \int_V \rho_i(\mathbf{r}) \psi(\mathbf{r},t) d \mathbf{r}.
\label{FreeEnergy_g}
\end{eqnarray}
Where: 
\begin{equation}
\Psi_0(\rho_0(\mathbf{r},t),T(\mathbf{r},t)) = T(\mathbf{r}) \log T(\mathbf{r})
\end{equation}
is the free energy term for the solvent.\\
$\Psi_i$,i = 1,2 which are the free energy terms for the two ionic species are given by:
\begin{equation}
\Psi_i(\rho_i(\mathbf{r},t),T(\mathbf{r},t)) = k_B T(\mathbf{r},t) \rho_i(\mathbf{r},t) \left[\log \rho_i(\mathbf{r},t) - C_i \log T(\mathbf{r},t)\right],
\label{ideal_free}
\end{equation}
The corresponding thermodynamic pressure will be:
\begin{equation}
\begin{cases}
\displaystyle P_{i} = \rho_{i} T,     i = 1,2\\
\displaystyle P_0 = -T \log T
\end{cases}
\end{equation}
and the whole system equations is as follows:
\begin{equation}
\begin{cases}
\displaystyle \frac{\partial}{\partial t} \rho_i + \nabla \cdot (\rho_i \vec{u_i})=0 ,      i = 0,1,2  \\
\displaystyle \nu_i \rho_i (\vec{u_i}-\vec{u_0}) = - k_B \nabla (\rho_i T) - z_i e \rho_i \nabla \phi,      i = 1,2  \\
\displaystyle m(\frac{\partial}{\partial t}\vec{u_{0}}+\vec{u_{0}}\nabla \vec{u_{0}})+ \nabla P_{0} + \sum_{i =1}^{2} \nu_i \rho_i (\vec{u_0}-\vec{u_i}) = \nabla\cdot\lambda_{0}\nabla \vec{u_0} \\
\displaystyle \nabla\cdot\vec{u_{0}} = 0\\
\displaystyle -\nabla \cdot \epsilon \nabla \phi =\sum_i \rho_i z_i e+\rho_f.\\
\displaystyle \left(\sum_{i=0}^2  k_B C_i \rho_i\right) \frac{\partial T}{\partial t} + \left(\sum_{i=0}^2  k_B C_i \rho_i \vec{u_i} \right) \cdot \nabla T + \left(\sum_{i=1}^2 k_B \rho_i  \nabla \cdot \vec{u_i}\right) T \nonumber =\\  \nabla \cdot k \nabla T +  \sum_{i=1}^N \nu_i\rho_i |\vec{u_i}-\vec{u_{0}}|^2 + \lambda_{0}|\vec{u_{0}}|^2+ q.
\end{cases}
\label{temp_Electroo}
\end{equation}\\~\\

We first apply asymmetrical boundary zeta potentials along the upper and lower wall of the channel and list the parameter values in Table~\ref{tab:zeta}.
\begin{table}[h]
  \centering
\begin{tabular}{ |p{4cm}||p{4cm}|p{4cm}|  }
 \hline
 \multicolumn{3}{|c|}{Parameter List} \\
 \hline
Parameter Name & Parameter Explanation &Value\\
 \hline
  $\phi\vert_{x=0}-\phi\vert_{x=10}$ & Voltage &100\\
  $\rho$ & background fluid density &1\\
 k & Heat conductance&  100\\
 Cv & Heat Capacitance & 300 \\
 z &  ASM &1\\
 e &  AND   &1\\
 $\mu$ & Mobility constant for background fluid& 1\\
 $\mu_1$ &Mobility constant for positive charge &1.334\\
 $\mu_2$ &Mobility constant for positive charge &2.032\\
 $\epsilon$ & Dielectric constant& 1\\
 $q_1$ & positive charge valence number &1 \\
 $q_2$ & negative charge valence number &-1\\
 \hline
\end{tabular}
\caption{\label{tab:zeta}Parameter values for the second numerical experiment.}
\end{table}

 The velocity plot of the positive charge species and the background fluid are shown in Figure~\ref{positive}, 
 \begin{figure}[H]
 \begin{center}
\includegraphics[width=0.9\textwidth]{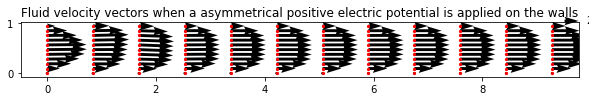}\\
\includegraphics[width=1.0\textwidth]{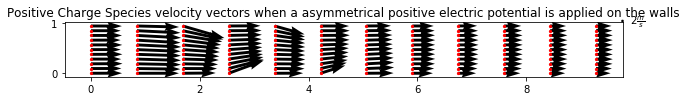}
\caption{Velocity plot of the positive charge species and the background fluid. (a) Background fluid velocity (b) Positive charge species velocity\label{positive}}
\end{center}
\end{figure}
Local Nusselt Number is used to describe the effect of zeta potentials on heat transfer which is defined by
\begin{equation}
Nu = \frac{-h\frac{\partial T}{\partial y}}{T_{w}-T_{m}}
\end{equation}
where h is the width of channel, $T_w$ is the wall temperature, $T_{m}$ is the bulk mean temperature, and y is the perpendicular distance from the channel walls. The Nusselt number along the upper and lower walls at stationary state is as shown in Figure~\ref{nusselt}.

 \begin{figure}[h]
 \begin{center}
\includegraphics[width=0.60\textwidth]{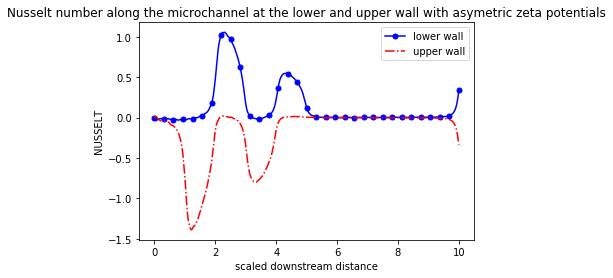}
\caption{Nusselt number along the upper and lower wall of channel.
  \label{nusselt}}

\end{center}
\end{figure}

We can see the zeta potential will cause perpendicular uneven distribution of the temperature, thus may have negative effect on the horizontal charge transport efficiency and also may have negative effect on the charge separation.\\~\\

We also computed the case where no zeta potential on the side walls are posed.In this case we examined the stationary temperature profile development and focusing on two aspects the radial and horizontal distribution.
\begin{figure}[H]
\begin{center}
\includegraphics[width=0.45\textwidth]{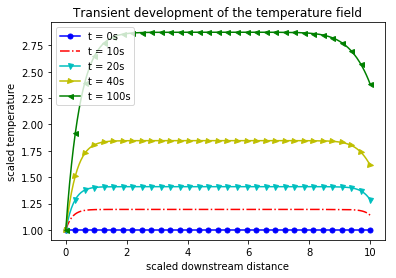}
\caption{The transient development of the temperature field}
\end{center}
\end{figure}

The above figure shows the transient development of the temperature field. The Joule heating effect is explicitly showed as time goes on, the temperature of the whole capillary tube has been elevated. The plot shows the temperature profile in horizontal direction of the center, and the temperature gradients are mainly in the inlet and outlet.

\begin{figure}[H]
\begin{center}
\includegraphics[width=0.45\textwidth]{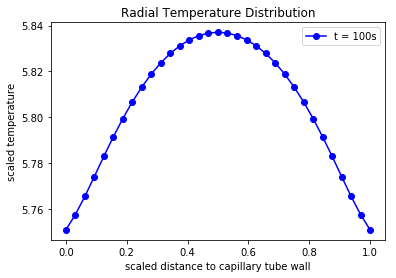}
\caption{radial temperature profile at equilibrium at the midpoint of channel.}

\end{center}
\end{figure}
Radial temperature profile has a parabola like shape, large radial temperature gradient may have negative effect on charge separation and electroosmosis process since the temperature gradient will induce charge motions orthogonal to axial direction.

In our model we captured the effect of temperature distribution
exhibiting a parabolic profile across the horizontal direction of the
tube. But the main influence on separation efficiency is via the
establishment of a radial temperature profile across the lumen of the
channel. An overall increase in temperature of the background fluid
has low influence on the overall quality of separation. It is known
that Joule heating parameter, auto thermal Joule heating parameter,
external cooling parameter, Peclet number are crucial in the process
of electroosmosis. Which can be translated into in our model. Our full
numerical experiment model will have the ability of using control
variable on various parameter and boundary conditions to examine the
effects from different factors on the quality of separation.

\section{Conclusions} 
We proposed a general framework for solving Non-Isothermal
electrokinetics equation based on a discretization using a logarithmic
transformation of the charge carrier densities and temperature
variable. We designed the numerical method for approximation of the
nonlinear fixed point iteration so that it meets the sufficient
conditions for strict discrete energy dissipation. The discrete energy
estimate, inherited from the continuous case, is satisfied and this
shows consistency of our numerical model with the thermodynamic laws.
Introducing more complex computational domains and coupling with
Navier Stokes equation allows for generalization of the numerical
models to electrochemistry and electrophysiology to study the heat
effect for battery, semiconductor and temperature gated ion
channels. Such topics are in the focus of our current and future
research.

\newpage
\bibliography{ref}
\bibliographystyle{ieeetr}

\end{document}